\theoremstyle{plain}
\newtheorem{theorem}[equation]{Theorem}
\newtheorem{lemma}[equation]{Lemma}
\newtheorem{prop}[equation]{Proposition}
\newtheorem{cor}[equation]{Corollary}
\theoremstyle{definition}
\newtheorem{remark}[equation]{Remark}
\newcommand{\bggo}{\mathcal O}
\newcommand{\mf}[1]{\displaystyle{\mathfrak{#1}}}
\newcommand{\comment}[1]{}
\DeclareMathOperator{\spec}{\ensuremath{Spec}}
\DeclareMathOperator{\Gr}{\ensuremath{gr}}
\DeclareMathOperator{\Hom}{\ensuremath{Hom}}
\begin{document}

\title{Ideals in deformation quantizations over $\mathbb{Z}/p^n\mathbb{Z}$}
\author{Akaki Tikaradze}
\address{The University of Toledo, Department of Mathematics, Toledo, Ohio, USA}
\email{\tt tikar06@gmail.com}
\maketitle

\begin{abstract}
Let $\bf{k}$ be a perfect field of characteristic $p>2.$ 
Let $A_1$ be an Azumaya algebra over a smooth symplectic affine variety over $\bf{k}.$ Let $A_n$ be a
deformation quantization of $A_1$ over $W_n(\bf{k})$. We prove that all $W_n(\bf{k})$-flat two-sided
ideals of $A_n$ are generated by central elements.

\end{abstract}

\vspace*{0.5in}

Let $\bf{k}$ be a perfect field of characteristic $p>2.$
 For $n\geq 1$, let $W_n(\bold{k})$ denote  the ring of length $n$ Witt vectors over $\bf{k}.$
Also, $W(\bold{k})$ will denote the ring of Witt vectors over $\bf{k}$. As usual, given an algebra $B$ its center
will be denoted by $Z(B).$
 Throughout the paper we will fix once and for all an affine smooth symplectic variety $X$ over $\bf{k},$
 and an Azumaya algebra $A_1$ over $X$ (equivalently over $\bggo_X.)$ 
Thus, we may (and will) identify the center of $A_1$ with $\bggo_X$-the structure ring of $X: Z(A_1)=\bggo_X.$
 Let  $\lbrace, \rbrace$ denote the corresponding 
Poisson bracket on $\bggo_X.$ A deformation quantization of  $A_1$ over $W_n(\bold{k}), n\geq 1$ is, by definition, a flat associatiove $W_n(\bf{k})$-algebra $A$ 
equipped with an isomorphism $A/pA\simeq A_1$ such that for any $a, b\in A$ such that
$a\ mod\ p\in \bggo_X, b\ mod\ p\in \bggo_X,$ one has 
$$\lbrace a\ mod\ p, b\ mod\ p\rbrace=(\frac{1}{p} [a, b])\ mod\ p.$$
One defines similarly a quantization of $A_1$ over $W(\bold{k}).$

Main result of this note is the following

\begin{theorem}\label{azumaya}
Let $A$ be a deformation quantization over $W_n(\bf{k})$ of an Azumaya algebra $A_1$ over $X.$
 Let $I\subset A$ be a two-sided ideal which is flat over $W_n(\bold{k}).$ Then $I$ is generated
 by central elements: $I=(Z(A)\cap I)A.$ 

\end{theorem}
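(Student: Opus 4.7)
I would prove the theorem by induction on $n$. The base case $n=1$ is classical: two-sided ideals of the Azumaya algebra $A_1$ are generated by their intersection with $Z(A_1) = \bggo_X$, so $I = (I \cap \bggo_X)A_1$. For the inductive step with $n \ge 2$, set $A' := A/p^{n-1}A$ and $I' := I/p^{n-1}I$; the $W_n$-flatness of $I$ gives $I \cap p^{n-1}A = p^{n-1}I$, so $I'$ is a $W_{n-1}$-flat two-sided ideal of the deformation quantization $A'$, and by induction $I' = (Z(A') \cap I')A'$. Writing $J := (Z(A) \cap I)A \subset I$, the theorem reduces to $I \subset J + p^{n-1}I$: once this is established, $p^{n-1}I \subset p^{n-1}J + p^{2(n-1)}I = p^{n-1}J \subset J$ (using $2(n-1) \ge n$, so $p^{2(n-1)}I = 0$, together with $pJ \subset J$), which forces $I = J$. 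Reducing modulo $p^{n-1}A$, the desired inclusion says exactly that the image of $Z(A) \cap I$ in $A'$ generates the two-sided ideal $I'$.

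The main tool is a lifting statement for central elements, via the vanishing of Hochschild cohomology for Azumaya algebras. Given $\bar z \in Z(A') \cap I'$ and any lift $z \in I$, the centrality of $\bar z$ in $A'$ forces $[z,a] \in p^{n-1}A \cap I = p^{n-1}I$ for every $a \in A$; through the isomorphism $p^{n-1}I \cong \bar I := I/pI$ given by multiplication by $p^{n-1}$, the map $a \mapsto p^{-(n-1)}[z,a] \bmod p$ factors as a derivation $d \colon A_1 \to \bar I$. The Azumaya condition provides a Morita equivalence between $A_1$-bimodules and $\bggo_X$-modules, whence $HH^1_{\bggo_X}(A_1, M) = 0$ for every $A_1$-bimodule $M$; in particular, every $\bggo_X$-linear derivation $A_1 \to \bar I$ is inner, of the form $[m, \cdot]$ for some $m \in \bar I$. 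Lifting $m$ to $\tilde m \in I$ and replacing $z$ by $z - p^{n-1}\tilde m$ then gives an element of $Z(A) \cap I$ lifting $\bar z$.

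The main obstacle is that $d$ need not be $\bggo_X$-linear: restricted to $\bggo_X$, it encodes Hamiltonian information $X_{\bar z}$ which does not vanish for arbitrary $\bar z$. I would handle this by combining two structural facts. First, $\bar I \cap \bggo_X$ is a Poisson ideal of $\bggo_X$, since $\{f, g\} = p^{-1}[\tilde f, \tilde g] \bmod p$ and $I$ is two-sided. Second, in characteristic $p$ on a smooth symplectic variety, every Poisson-stable ideal of $\bggo_X$ equals its extension from $\bggo_X^p$: Poisson-stability coincides with derivation-stability (Hamiltonian vector fields locally generate $T_X$ as an $\bggo_X$-module via the symplectic form), and derivation-stable ideals descend along the finite locally free Frobenius $\bggo_X^p \hookrightarrow \bggo_X$. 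Combined with Azumaya generation $\bar I = (\bar I \cap \bggo_X) A_1$, this yields $\bar I = (\bar I \cap \bggo_X^p) A_1$. It therefore suffices to lift elements $\bar z$ of the Poisson-central piece $\bggo_X^p \cap \bar I$; for such $\bar z$ the Hamiltonian $X_{\bar z}$ vanishes identically on $\bggo_X$, the derivation $d$ is $\bggo_X$-linear (after careful bookkeeping across the nested inductions on $n$), the $HH^1$-vanishing applies, and enough central lifts arise in $Z(A) \cap I$ to generate $I'$ as a two-sided $A'$-ideal.
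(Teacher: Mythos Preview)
Your inductive framework, the reduction to lifting central generators, and the use of Azumaya $HH^1$-vanishing to kill inner obstructions are all correct and match the spirit of the paper's argument. The $n=2$ case goes through exactly as you describe: for $\bar z\in \bggo_X^p\cap \bar I$ the derivation $d=p^{-1}[z,-]$ satisfies $d|_{\bggo_X}=\{\bar z,-\}=0$, so $d$ is $\bggo_X$-linear, hence inner, and the adjusted lift lands in $Z(A)\cap I$.

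The gap is for $n\ge 3$, hidden in the phrase ``after careful bookkeeping across the nested inductions.'' Take $n=3$ and $\bar z\in \bggo_X^p\cap\bar I$, say $\bar z=w_1^p$. After one adjustment you obtain $z_1\in I$ with $z_1\bmod p^2\in Z(A_2)\cap I_2$; in Witt coordinates $z_1\bmod p^2=\phi_2(w_1,w_2)$ for some $w_2\in Z_1$. The next derivation is $d_2=p^{-2}[z_1,-]$, and by the Stewart--Vologodsky formula its restriction to $Z_1$ is
\[
d_2|_{Z_1}=w_1^{p-1}\{w_1,-\}+\{w_2,-\}.
\]
The second summand can be absorbed by changing $w_2$, but the first corresponds to the closed form $w_1^{p-1}dw_1=C^{-1}(dw_1)$, which is exact only when $dw_1=0$, i.e.\ $w_1\in Z_1^p$, i.e.\ $\bar z\in Z_1^{p^2}$. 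Your Poisson-ideal argument only gives one step of Frobenius descent, $m_1=(m_1\cap Z_1^p)Z_1$; it cannot be iterated because the induced Poisson bracket on $Z_1^p$ is identically zero. So you have not produced enough elements $\bar z$ for which $d$ becomes $\bggo_X$-linear.

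What is actually needed is the $(n{-}1)$-fold descent $m_1=(m_1\cap Z_1^{p^{n-1}})Z_1$. The paper obtains this by interpreting the condition ``$\delta_x(Z_1)\subset m_1$ for all $x\in m_n=Z_n\cap I_n$'' as $F^{n-1}d(m_n)\subset m_1\Omega^1_{Z_1}$ in the de Rham--Witt complex, and then proving (Lemma~\ref{frob}) that this forces $\bar m_n\subset (\bar m_n\cap Z_1^p)Z_1$ at the level of Witt vectors, hence $m_1=l^{p^n}Z_1$. Even after this, the lifting is not purely an $HH^1$ statement: once $m_1=l^{p^n}Z_1$, the paper applies Proposition~\ref{center} to $R=A/I$ to control the \emph{central} part of the obstruction, writing $py\in Z(A/I)\cap p(A/I)$ in the explicit form $\sum p^i z_i^{p^{n-i}}$ and thereby producing a genuine element of $Z(A)\cap I$ over each $x\in m_1$. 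Your $HH^1$-vanishing handles only the inner (non-central) part of the obstruction; the central part requires this extra input.
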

\footnote{We showed in \cite{Ti} that Hochschild cohomology of a quantization $A$ is isomorphic to the de Rham-Witt
 complex $W_n\Omega^{*}_X$ of $X$}
Before proving this result we will need to recall some results of Stewart and Vologodsky \cite{SV} on centers of
certain algebras over $W_n(\bf{k})$. 

Throughout for an associative flat $W_n(\bf{k})$-algebra $R,$ we will denote its reduction
$\mod p^m$ by $R_m=R/p^mR.$ Also center of an algebra $R_m$ will be denoted by $Z_m, m\leq n.$
Recall that in this setting there is the natural deformation Poisson bracket on $Z_1$ defined as follows.
Given $z, w\in Z_1,$ let $\tilde{z},\tilde{w}$ be lifts in $R$ of $z, w$ respectively. Then put 
$$\lbrace z, w\rbrace=\frac{1}{p}[\tilde{z},  \tilde{w}] \mod p.$$
 In this setting, Stewart and Vologodsky [\cite{SV} formula (1.3)] 
constructed a ring homomorphism $\phi_m:W_m(Z_1)\to Z_m$ from the ring of length $m$ Witt vectors over $Z_1$ to $Z_m,$
 defined as follows
$$\phi_n(z_1,\cdots, z_m)=\sum_{i=1}^mp^{i-1}\tilde{z_i}^{p^{m-i}}$$
where $\tilde{z_i}\in R$ is a lift of $z_i, 1\leq i\leq m$. We also have the following natural maps 
$$r:Z_m\to Z_{m-1}, r(x)= x\mod p^{m-1}, v:Z_{m-1}\to Z_m, v(x)=p\tilde{x}$$
where $\tilde{x}$ is a lift of $x$ in $R_m.$ 
On the other hand on the level of Witt vectors of $Z_1,$ we have the Verschibung map
$V:W_m(Z_1)\to W_{m+1}(Z_1)$ and the Frobenius map $F:W_m(Z_1)\to W_{m-1}(Z_1).$ It was checked in \cite{SV} 
that above maps commute 
$$\phi_{m-1}F=r\phi_m,\quad \phi_mV=v\phi_{m-1}.$$

We will recall the following crucial computation from \cite{SV}.
 Let $x=\phi_m(z), z=(z_1,\cdots,z_m)\in W_m(Z_1)$ and let $\tilde{x}$ be a lift of $x$ in $R_{m+1}.$
Then it was verified in \cite{SV} that the following inequality holds in $Der_{\bf{k}}(Z_1, Z_1)$
\begin{equation}\label{1}
\delta_x=(\frac{1}{p^m}[\tilde{x},-])\mod p|_{Z_1}=\sum_{i=1}^m z_i^{p^{m-i}-1}\lbrace z_i,-\rbrace 
\end{equation}

 The main result of [\cite{SV}, Theorem 1] states that if $\spec Z_1$ is smooth variety and 
the deformation Poisson bracket on $Z_1$ is induced from a symplectic form on
$\spec Z_1,$ then the map $\phi_m$ is an isomorphism for all $m\leq n$. In particular,
$${Z_1}^{p^m}=Z_{m+1}\mod p.$$
We will need the following slight generalization of this result. Its proof follows very closely to the
one in [\cite{SV}, Theorem 1].

\begin{prop}\label{center}

 Let $n\geq 1$ and $m\subset \bggo{}_X$ be an ideal, and let $B=\bggo{}_X/m^{p^{n}}\bggo{}_X.$
Let $R$ be an associative flat $W_n(\bold{k})$-algebra such that $Z(R/pR)=B$ and 
the corresponding deformation Poisson bracket
 on $B$ coincides with the one induces from $X.$ Then 
 $$Z(R)=\phi_n(W_n(B)),\quad Z(R)\cap pR=\phi_n(VW_{n-1}(B)).$$

\end{prop}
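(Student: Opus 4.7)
The plan is to proceed by induction on $n$, following the structure of [\cite{SV}, Theorem 1]. The base case $n=1$ is immediate: $pR=0$, so $Z(R) = Z(R/pR) = B$, while $\phi_1$ is the identity on $B$ and the second equality reduces to $0 = 0$.

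For the inductive step, observe that $R_{n-1} := R/p^{n-1}R$ is a flat $W_{n-1}(\bf{k})$-algebra with $Z(R_{n-1}/pR_{n-1}) = B$. Rewriting $B = \bggo_X/(m^p)^{p^{n-1}}\bggo_X$, the inductive hypothesis applies with the ideal $m^p$ in place of $m$ and yields $Z(R_{n-1}) = \phi_{n-1}(W_{n-1}(B))$. The second equality of the proposition then follows quickly: $W_n(\bf{k})$-flatness identifies $pR$ with $R_{n-1}$ via $p\tilde u \leftrightarrow \tilde u \bmod p^{n-1}$, under which $Z(R)\cap pR$ corresponds to $Z(R_{n-1})$, so $Z(R)\cap pR = p\,\phi_{n-1}(W_{n-1}(B)) = \phi_n(V W_{n-1}(B))$ by the identity $\phi_n V = v\phi_{n-1}$.

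The inclusion $\phi_n(W_n(B)) \subseteq Z(R)$ in the first equality is the standard \cite{SV} construction based on identity \eqref{1} and does not use smoothness of $\spec B$. For the reverse inclusion, I claim it suffices to show that for every $x \in Z(R)$ the reduction $\bar x \in R/pR$ lies in $B^{p^{n-1}}$: if $\bar x = z_1^{p^{n-1}}$ with $z_1 \in B$, then $x - \phi_n(z_1, 0, \ldots, 0) = x - \tilde z_1^{p^{n-1}}$ lies in $pR \cap Z(R) = \phi_n(V W_{n-1}(B))$, and additivity of $\phi_n$ then presents $x$ as an element of $\phi_n(W_n(B))$.

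The heart of the argument is thus the inclusion $Z(R) \bmod p \subseteq B^{p^{n-1}}$. Given $x \in Z(R)$, write $x \bmod p^{n-1} = \phi_{n-1}(z_1', \ldots, z_{n-1}')$ using the inductive hypothesis. Since $x$ is central in all of $R$, the operator $(1/p^{n-1})[x,-] \bmod p$ vanishes identically on $B$, so by \eqref{1} the derivation $\sum_{i=1}^{n-1} (z_i')^{p^{n-1-i}-1}\{z_i', -\}$ is zero on $B$; non-degeneracy of the symplectic form recasts this as $\sum_i (z_i')^{p^{n-1-i}-1}\,dz_i' = 0$ in $\Omega^1_B$. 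The main obstacle I foresee is converting this differential-form identity into the assertion that $\bar x = (z_1')^{p^{n-2}}$ is a $p^{n-1}$-th power in $B$; the plan is to lift the $z_i'$ to $\bggo_X$, use smoothness of $X$ to interpret the relation in $\Omega^1_X$, and exploit the explicit form $B = \bggo_X/m^{p^n}$ to realize the required $p$-th root inside $B$.
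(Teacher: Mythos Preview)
Your proposal is correct and follows essentially the same inductive strategy as the paper. The only organizational difference is that you establish the second equality $Z(R)\cap pR=\phi_n(VW_{n-1}(B))$ first (via the isomorphism $v$) and then reduce the first equality to the mod-$p$ statement $Z(R)\bmod p\subseteq B^{p^{n-1}}$, whereas the paper proves the differential lemma first, deduces $Z(R)\subset\phi_n(W_n(B))+p^{n-1}R$, and only then reads off both equalities together. The ``main obstacle'' you anticipate---turning $\sum_i (z_i')^{p^{n-1-i}-1}\,dz_i'=0$ in $\Omega^1_B$ into $z_1'\in B^p+\bar m^{p^2}B$ (so that $(z_1')^{p^{n-2}}=a^{p^{n-1}}$ since $\bar m^{p^n}=0$) by lifting to $\bggo_X$ and invoking the Cartier isomorphism---is precisely the content of the paper's Lemmas~\ref{muh} and~\ref{key}, applied with $n-1$ in place of $n$ and with the ideal $m^p$ in place of $m$, matching your rewriting $B=\bggo_X/(m^p)^{p^{n-1}}\bggo_X$.
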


Just as in [\cite{SV}, Lemma 2.7] the following result plays the crucial role.

\begin{lemma}\label{muh}

Let $z_1,\cdots, z_n\in B$ be such that $\sum_{i=1}^{n} z_i^{p^{n-i}-1}dz_i=0.$
Then $z_i\in B^p+{\bar{m}}^{p^{i}}B,$ where $\bar{m}=m/m^{p^n}\bggo{}_X.$
\end{lemma}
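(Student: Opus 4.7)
The plan is to adapt the inductive Cartier-operator argument of \cite{SV}, Lemma 2.7 (which handles the case $B$ smooth) to the nilpotent thickening $B=\bggo_X/m^{p^n}$ by carefully tracking the $\bar m$-adic error terms. Lift each $z_i$ to $\tilde z_i\in\bggo_X$ and set $\omega:=\sum_{i=1}^{n}\tilde z_i^{\,p^{n-i}-1}d\tilde z_i$; the hypothesis gives $\omega\in m^{p^n}\Omega^1_{\bggo_X}+\bggo_X\cdot dm^{p^n}$, and a direct check shows $\omega$ is closed. I induct on $n$. For the base case $n=1$, working locally in \'etale coordinates on the smooth $X$ (which provide a $p$-basis for $\bggo_X$) shows that any $\tilde z_1$ with $d\tilde z_1\in m^{p}\Omega^1_{\bggo_X}+\bggo_X\cdot dm^{p}$ is congruent modulo $m^{p}$ to a $p$-th power, so $z_1\in B^p=B^p+\bar m^{p}B$.

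For the inductive step, apply the Cartier operator $C\colon H^1(\Omega^{\bullet}_{\bggo_X})\to\Omega^1_{\bggo_X}$ to $[\omega]$. Using $C(g^{p-1}dg)=dg$ and $C(g^p\eta)=gC(\eta)$ one computes $C[\tilde z_i^{\,p^{n-i}-1}d\tilde z_i]=\tilde z_i^{\,p^{n-i-1}-1}d\tilde z_i$ for $1\le i<n$ and $C[d\tilde z_n]=0$, giving
$$\sum_{i=1}^{n-1}\tilde z_i^{\,p^{n-i-1}-1}d\tilde z_i=C[\omega]\text{ in }\Omega^1_{\bggo_X}.$$
The key technical point is that $C$ sends $m^{p^n}\Omega^{1,\mathrm{cl}}_{\bggo_X}+\bggo_X\cdot dm^{p^n}$ into $m^{p^{n-1}}\Omega^1_{\bggo_X}$: writing $\omega=dh+\alpha$ with $h\in m^{p^n}$ and $\alpha\in m^{p^n}\Omega^{1,\mathrm{cl}}_{\bggo_X}$, the local Cartier decomposition $\alpha\equiv\sum_j g_j^{p}x_j^{p-1}dx_j$ modulo exact forms forces $g_j\in m^{p^{n-1}}$ by matching $m$-adic orders, and then $C[\alpha]=\sum_j g_j\,dx_j\in m^{p^{n-1}}\Omega^1_{\bggo_X}$. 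Hence the reduced relation holds in $\Omega^1_{B'}$ for $B'=\bggo_X/m^{p^{n-1}}$, and the inductive hypothesis applied to $B'$ yields $\tilde z_i\in\bggo_X^{\,p}+m^{p^i}+m^{p^{n-1}}=\bggo_X^{\,p}+m^{p^i}$ for $1\le i\le n-1$, i.e.\ $z_i\in B^p+\bar m^{p^i}B$.

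It remains to extract $z_n$. Write $\tilde z_i=\tilde u_i^{\,p}+\tilde r_i$ with $\tilde r_i\in m^{p^i}$ for $i<n$, so $d\tilde z_i=d\tilde r_i$. The char-$p$ identity $(A+B)^{p^{n-i}-1}=\sum_{k=0}^{p^{n-i}-1}(-1)^{k}A^{p^{n-i}-1-k}B^{k}$ (a consequence of $\binom{p^N-1}{k}\equiv(-1)^k\pmod p$) expands each $\tilde z_i^{\,p^{n-i}-1}d\tilde z_i$ as a sum. Terms with $k+1$ coprime to $p$ are exact, equal to $\tfrac{1}{k+1}d\bigl(\tilde u_i^{\,p(p^{n-i}-1-k)}\tilde r_i^{\,k+1}\bigr)$ in view of $\tilde r_i^{\,k}d\tilde r_i=\tfrac{1}{k+1}d(\tilde r_i^{\,k+1})$, while the residual terms of the form $\tilde u_i^{\,p(p^{n-i}-p\ell)}\tilde r_i^{\,p\ell-1}d\tilde r_i$ can be rewritten via $\tilde r_i^{\,p\ell-1}d\tilde r_i=\tfrac{1}{\ell}(\tilde r_i^{\,\ell})^{p-1}d(\tilde r_i^{\,\ell})$ (for $\gcd(\ell,p)=1$, iterating otherwise). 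A combinatorial bookkeeping of these contributions modulo $m^{p^n}\Omega^1_{\bggo_X}+\bggo_X\cdot dm^{p^n}$ yields $dz_n=0$ in $\Omega^1_B$, and the base-case argument then gives $z_n\in B^p=B^p+\bar m^{p^n}B$. The main obstacles are the refined Cartier-filtration bound used in the inductive step and the parallel bookkeeping in this extraction of $z_n$; both reduce, locally on $X$, to explicit monomial computations in a $p$-basis.
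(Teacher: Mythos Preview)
Your proof follows essentially the same strategy as the paper: lift to $S=\mathcal O_X$, observe that $\omega$ is closed, use the Cartier isomorphism to descend from level $n$ to level $n-1$, apply the inductive hypothesis, and then isolate $dz_n$. The only cosmetic differences are that the paper phrases the reduction via the inverse Cartier map $C^{-1}$ rather than $C$, and handles both the base case and the final extraction of $z_n$ via flatness of $\Omega^1_S/dS$ (resp.\ $\Omega^1_S/Z^1$) over $S^p$ instead of local $p$-basis computations and your combinatorial expansion; your version, on the other hand, is more careful in tracking the $S\cdot dm^{p^n}$ contribution to $\ker(\Omega^1_S\to\Omega^1_B)$.
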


\begin{proof}
Put $S=\bggo_X.$
We will proceed by induction on $n.$ Let $n=1.$ Let $z_1$ be a lift of $x_1$ in $S.$
Thus $dz_1\in m^p\Omega^1_S\cap dS.$ Since $\Omega^1_S/dS$ is a flat $S^p$-module, it follows that
 $$m^p\Omega^1_S\cap dS=m^pdS=d(m^pS).$$ So, $dx_1\in d(m^pS)$. Hence $$x_1\in m^pS+Ker(d)=m^pS+S^p.$$
Assume that our statement is true for $n-1.$
Let $x_i\in S$ be a lift of $z_i, 1\leq i\leq n.$ Thus $$\sum_{i=1}^{n} x_i^{p^{n-i}-1}dx_i\in m^{p^{n}}\Omega^1_S.$$
As usual $Z^1(\Omega_S)$ will denote $Ker(d)\subset \Omega^1_S.$
Remark that $$Z^1(\Omega_S)\cap m^{p^{n}}\Omega^1_S=m^{p^{n}}\Omega^1_S,$$ this follows from flatness of
 $\Omega^1_S/Z^1(\Omega_S)$ over $S^{p^{n}}.$ Thus  $$\sum x_i^{p^{n-i}-1}dx_i\in m^{p^{n}}Z^1(\Omega^{*}_S).$$
Recall that the inverse Cartier map $C^{-1}:\Omega^1_S\to \Omega^1_S/dS$ is defined as follows 
$$C^{-1}(fdg)=f^pg^{p-1}dg.$$
Recall also that smoothness of $S$ implies that $C^{-1}$ defines an isomorphism onto
 $Z(\Omega^1_S)/dS,$  Thus we may write 
$$C^{-1}(\sum_{i<n}x_i^{p^{n-1-i}}dx_i)=C^{-1}(x)+dx', x\in m^{p^{n-1}}\Omega^1_S, x'\in S.$$
Injectivity of $C^{-1}$ implies that $$\sum_{i<n}x_i^{p^{n-i-1}}dx_i\in m^{p^{n-1}}\Omega^1_S.$$ Thus by 
induction assumption, we get that  $x_i\in S^p+m^iS, i<n.$ Therefore $dx_n\in m^{p^{n}}S$, so
$x_n\in S^p+m^{p^{n}}S$
\end{proof}

\begin{lemma}\label{key}
Let $z_1,\cdots, z_n\in B$ be such that $\sum_{i=1}^nz_i^{p^{n-i}-1}\lbrace z_i, -\rbrace=0.$ Then 
$z_i\in B^p+m^{p^{i}}B.$
\end{lemma}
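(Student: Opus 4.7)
The plan is to reduce Lemma \ref{key} to Lemma \ref{muh} by using the symplectic structure on $X$ to convert the Poisson-derivation identity into a differential-form identity.

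Since $X$ is symplectic with form $\omega$, there is a canonical $\bggo{}_X$-linear isomorphism $\omega^{\flat}: T_X \xrightarrow{\sim} \Omega^1_X$, $V \mapsto \iota_V\omega$, under which each Hamiltonian vector field $\{f,-\}$ corresponds to $\pm df$ for $f \in S := \bggo{}_X$. Being $S$-linear, $\omega^{\flat}$ commutes with multiplication by any element of $S$.

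Now lift each $z_i \in B$ to $x_i \in S$. The hypothesis says that the derivation $D := \sum_{i=1}^n x_i^{p^{n-i}-1}\{x_i,-\} : S \to S$ descends to the zero derivation on $B$; equivalently, $D(S) \subset m^{p^n}S$. Since $X$ is smooth, $T_X = \mathrm{Der}_{\mathbf{k}}(S, S)$ is a locally free $S$-module, whence $\mathrm{Der}_{\mathbf{k}}(S, m^{p^n}S) = m^{p^n}T_X$ and thus $D \in m^{p^n}T_X$. Applying the $S$-linear isomorphism $\omega^{\flat}$ to $D$ yields
\[
\pm \sum_{i=1}^n x_i^{p^{n-i}-1}\, dx_i \;=\; \omega^{\flat}(D) \;\in\; m^{p^n}\Omega^1_S,
\]
which is exactly the hypothesis of Lemma \ref{muh}. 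That lemma then gives $z_i \in B^p + \bar{m}^{p^i}B = B^p + m^{p^i}B$, as required.

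The essential (and essentially only nontrivial) step is the symplectic translation from derivations to one-forms. It rests on the smoothness of $X$ (for the identification $\mathrm{Der}_{\mathbf{k}}(S, m^{p^n}S) = m^{p^n}T_X$) and on the nondegeneracy of $\omega$ (so that $\omega^{\flat}$ is an isomorphism sending Hamiltonian vector fields to exact one-forms). Both are automatic from the standing hypotheses, so once the translation is performed Lemma \ref{muh} finishes the argument with no additional work.
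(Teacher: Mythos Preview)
Your proposal is correct and follows essentially the same route as the paper: use the symplectic form to pass from the Hamiltonian-derivation identity to a $1$-form identity, then invoke Lemma~\ref{muh}. The only cosmetic difference is that the paper sets up the isomorphism $\bar{\iota}:\Omega^1_B\to \mathrm{Der}_{\mathbf{k}}(B,B)$ directly on $B$ and applies Lemma~\ref{muh} as stated, whereas you lift to $S$ first and land on the condition $\sum x_i^{p^{n-i}-1}dx_i\in m^{p^n}\Omega^1_S$, which is exactly the first line of the proof of Lemma~\ref{muh}; either way the argument is the same.
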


\begin{proof}

Symplectic form $\omega\in \Omega^2_S$ gives an isomorphism $$\iota: \Omega^1_S\to Der{\bf{k}}(S,S)=Hom_S(\Omega^1_S, S)$$
such that $\iota(gdf)=g\lbrace f, -\rbrace, f, g\in S.$ Since $\Omega^1_B=\Omega^1_S\otimes_{S^{p^{n+1}}}B$, we get an isomorphism
$\bar{\iota}:\Omega^1_B\to Der_{\bf{k}}(B, B)$ defined as $\bar{\iota}(xdy)=x\lbrace y, -\rbrace.$ Thus applying $\bar{\iota}^{-1}$ to the equality
$$\sum_{i=1}^nz_i^{p^{n-i}-1}\lbrace z_i, -\rbrace=0,$$ we obtain that $$\sum_{i=1}^n z_i^{p^{n-i}-1}dz_i=0.$$ Hence by Lemma \ref{muh}
 we are done.
\end{proof}

\noindent Once Lemma \ref{key} is established, the proof of Proposition \ref{center} is identical to the one in \cite{SV}.
Indeed, by induction assumption, $\phi_{n-1}:W_{n-1}(B)\to Z(R_{n-1})$ is surjective. 
Let $x\in Z(R)$. Then there exists $z=(z_1,\cdots,z_{n-1})\in W_{n-1}(B)$ such that
 $$x'=x\mod p^{n-1}=\phi_{n-1}(z).$$ 
Hence by equality \eqref{1}, we have
$$0=\delta_{x'}=\sum_{i=1}^{n-1}z_i^{p^{n-1-i}}\lbrace z_i,-\rbrace=0.$$
Therefore by Proposition \ref{key}, we get that $z_i=a_i^p+b_i$, where $b_i\in \bar{m}^{p^{i}}B.$
Thus, $$\phi_{n-1}(z)=\phi_{n-1}(a_1^p,\cdots,a_{n-1}^p)\in \phi_n(W_{n}(B))\mod p^{n-1}.$$
Hence $$Z(A)\subset \phi_{n}(W_n(B))+p^{n-1}R.$$ Now since $v:Z(R_{n-1})\to Z(A)\cap pR$
is an isomorphism, surjectivity of $\phi_{n-1}$ implies that $\phi_n(W_n(B))=Z(R)$ and 
$\phi_n(VW_{n-1}(B))=Z(R)\cap pR.$ This concludes the proof of Proposition \ref{center}.







Now we can prove our main result.

\begin{proof}[Proof of Theorem \ref{azumaya}]

We proceed by induction on $n.$ When $n=1,$ the statement is a standard property
of Azumaya algebras [\cite{MR}, Proposition 7.9]. We will assume that the statement holds for $n$ and prove it for $n+1.$
Thus, $A$ is a quantization of $A_1$ over $W_{n+1}(\bf{k})$, and $I\subset A$ is a $W_{n+1}(\bf{k})$-flat
two-sided ideal.
Let us identify $Z_m=Z(A/p^mA)$ with $W_m(Z_1)$ via the isomorphism $\phi_m,m\leq n+1.$
We will put $I_i=I\mod p^i.$ Since by the assumption $I_i, A_i$ are free $W_i(\bold{k})$-modules,
it follows that $A_i/I_i$ is a free $W_i(\bold{k})$-module 
for all $i\leq n+1.$
 Put $m_{n}=Z_{n}\cap I_{n}.$ 
Recall that $r^i:A_m\to A_{m-i}, i\leq m$ denotes the projection map, while $v^i:A_{m-i}\to A_m$
denotes the multiplication by $p^i.$
For $i\leq n$, let us put $m_i=m_{n}\mod p^i.$ So $m_i$ is an ideal in $Z_i.$
It follows from the inductive 
assumption that for all $i\leq n,$ we have 
$I_i=m_iA_i.$ Moreover, since
$A_1$ is an Azumaya algebra, we have $m_1=m_1A_1\cap Z_1.$

 Since $A/I$ is a free  $W_{n+1}(\bold{k})$-module, we have a short exact sequence
$$
\begin{CD}
0\to I_1 @>{{v}}^{n}>> I@>r >> I_{n} \to 0 .
\end{CD}
$$
We claim that for all $x\in m_{n}$, we have  $${F}^{n-1}d(x)\subset m_1\Omega^1_{Z_1},$$
here 
$$F:W_n\Omega^{*}_{Z_1}\to W_{n-1}\Omega^{*}_{Z_1}$$
 is the Frobenius map of the de Rham-Witt complex
of $Z_1.$ Indeed, it follows from the above exact sequence that if $\tilde{x}$ is a lift of $x$ in $A_{n+1}$,
then $\delta_x=\frac{1}{p^n}ad(\tilde{x}):A_1\to A_1$ is a derivation such that $Im(\delta_x)\subset I_1$.
In particular, $\delta_x|_{Z_1}$ is a derivation of $Z_1$ whose range lies in the ideal $m_1.$
Therefore, by equality \eqref{1} it follows that the derivation $\delta_x$ corresponds to $F^n(dx)\in \Omega^1_{Z_1}$
under the identification $Der(Z_1)=\Omega^1_{Z_1}$ by the symplectic form on $X.$
Thus we obtain the desired inclusion. Recalling that $m_1=F^{n-1}(m_n)Z_1$, we get 

 $$F^{n-1}d(m_{n})\subset F^{m-1}(m_{n})\Omega^1_{Z_1}.$$ 




Next we will use the following

\begin{lemma}\label{frob}

Let $S$ be a smooth essentially of finite type commutative ring over $\bold{k}.$ Let $m$
be an ideal in $W_{n}(S)$ such that $F^{n-1}(dm)\subset F^{n-1}(m)\Omega^1_{S}.$ Put $\bar{m}=m \mod VW_{n-1}(S).$
Then $\bar{m}=(\bar{m}\cap S^p)S.$

\end{lemma}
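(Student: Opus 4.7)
The plan is to reduce the conclusion to the differential condition $d\bar m\subset \bar m\Omega^1_S$ and to extract this condition from the hypothesis by iterating the Cartier operator $n-1$ times. For the reduction I use the standard descent criterion for smooth $S$ over a perfect field of characteristic $p$: an ideal $J\subset S$ equals $(J\cap S^p)S$ if and only if $J$ is stable under every $\bf{k}$-derivation of $S$, equivalently $dJ\subset J\Omega^1_S$.

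Given $a_0\in\bar m$, lift to $x=(a_0,a_1,\dots,a_{n-1})\in m$. The identities $FdV=d$ and $Fd[a]=[a]^{p-1}d[a]$ in the de Rham--Witt complex, iterated, give
$$F^{n-1}(dx) = \sum_{i=0}^{n-1} a_i^{p^{n-1-i}-1}\,da_i \in \Omega^1_S,$$
while $F^{n-1}(m)$ generates the Frobenius-power ideal $\bar m^{[p^{n-1}]}$; here, for an ideal $J\subset S$, I write $J^{[p^k]}$ for the ideal generated by $\{f^{p^k}:f\in J\}$. Each summand is closed, so the hypothesis reads $F^{n-1}(dx)\in\bar m^{[p^{n-1}]}\Omega^1_S\cap Z^1(\Omega_S)$.

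I iteratively apply the Cartier operator $C$ (the inverse of $C^{-1}$, sending $Z^1(\Omega_S)\to\Omega^1_S$ with kernel $dS$). Using the semi-linearity $C(f^p\eta)=fC(\eta)$ together with $C(a^{p-1}da)=da$, one checks $C(a^{p^k-1}da)=a^{p^{k-1}-1}da$ for $k\geq 1$ and $C(da)=0$. Thus each application of $C$ kills the $i=n-1$ summand and divides each remaining exponent by $p$:
$$C^j\bigl(F^{n-1}(dx)\bigr) = \sum_{i=0}^{n-1-j} a_i^{p^{n-1-j-i}-1}\,da_i, \qquad C^{n-1}\bigl(F^{n-1}(dx)\bigr)=da_0.$$

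The crucial claim is that for any ideal $I\subset S$, $C$ carries $I^{[p]}\Omega^1_S\cap Z^1(\Omega_S)$ into $I\Omega^1_S$. In local étale coordinates $x_1,\dots,x_d$ on $S$, smoothness yields the $S^p$-module direct-sum decomposition $Z^1(\Omega_S)=dS\oplus\bigoplus_j S^p\cdot x_j^{p-1}dx_j$, and under it $C$ kills $dS$ and sends $\sum g_j^p x_j^{p-1}dx_j$ to $\sum g_j\,dx_j$. Writing $\eta=\sum H_j\,dx_j$ with $H_j\in I^{[p]}$ and expanding each $H_j$ in the $S^p$-basis $\{x^{\vec a}\}_{\vec a\in[0,p-1]^d}$ of $S$ (using $(a+b)^p=a^p+b^p$ in characteristic $p$) yields $H_j=\sum_{\vec a} c_{j,\vec a}^p x^{\vec a}$ with each $c_{j,\vec a}\in I$; the coefficient of $x_j^{p-1}$ in $H_j$ works out to be $g_j^p=c_{j,(p-1)e_j}^p$, forcing $g_j\in I$ and hence $C(\eta)\in I\Omega^1_S$. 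Iterating this $n-1$ times starting from $F^{n-1}(dx)\in\bar m^{[p^{n-1}]}\Omega^1_S$ produces $da_0\in\bar m\Omega^1_S$, concluding the proof. I expect the main obstacle to be establishing this Cartier-inclusion cleanly; it turns on careful bookkeeping between the $S$-basis of $\Omega^1_S$ and the $S^p$-basis of $S$.
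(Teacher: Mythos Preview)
Your argument is correct, and the endgame is the same as the paper's: both reduce to $d\bar m\subset\bar m\,\Omega^1_S$ and then use the Frobenius--descent criterion (derivations preserve $\bar m$, hence $\bar m$ is a $\Hom_{S^p}(S,S)$-submodule of the free $S^p$-module $S$, hence extended from $S^p$). The route to that differential inclusion is genuinely different, though. The paper invokes a structural flatness result of Illusie, namely that $\Omega^1_S/N_n$ with $N_n=F^{n-1}(W_n\Omega^1_S)$ is flat over $S^{p^{n-1}}$; this gives $\bar m^{[p^{n-1}]}\Omega^1_S\cap N_n=\bar m^{[p^{n-1}]}N_n=F^{n-1}(mW_n\Omega^1_S)$ in one stroke, and then $\ker F^{n-1}=VW_{n-1}\Omega^1_S$ lets one lift back and reduce modulo $V$. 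You instead compute $F^{n-1}(dx)=\sum_i a_i^{p^{n-1-i}-1}da_i$ explicitly and strip the Frobenius powers one layer at a time by iterating the Cartier operator, proving the key step $C\bigl(I^{[p]}\Omega^1_S\cap Z^1\bigr)\subset I\,\Omega^1_S$ by hand in \'etale coordinates. Your inclusion is, in fact, exactly the $n=2$ case of the paper's flatness statement (since $N_2=Z^1(\Omega_S)$), so your proof can be read as an elementary unwinding of the Illusie input, iterated $n-1$ times; it is closer in spirit to the paper's own inductive Cartier argument in Lemma~\ref{muh}. The trade-off: your version is self-contained and avoids citing \cite{Il}, while the paper's version handles all $n$ at once without an induction. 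One small point worth making explicit in your write-up is why the $x_j^{p-1}dx_j$-coefficient of $\eta$ in the $S^p$-basis of $\Omega^1_S$ really equals $g_j^p$; this amounts to checking that $dS$ contributes nothing to that coefficient, which follows since $\vec b-e_j=(p-1)e_j$ forces $b_j=p$, impossible for $\vec b\in[0,p-1]^d$.
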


\begin{proof}

At first we will show that $$F^{n-1}(m)\Omega^1_S\cap F^{n-1}(W_{n}\Omega^1_S)=F^{n-1}(mW_{n}\Omega^1_S).$$
For simplicity we put $N_n=F^{n-1}(W_{n}\Omega^1_S).$ Thus, we want to show that 
$$\bar{m}^{p^{n-1}}\Omega^1_S\cap N_n=\bar{m}^{p^{n-1}}N_n.$$ It follows from our assumptions on $S$ that $\Omega^1_S/N_n$ is a flat
$S^{p^{n-1}}$-module [\cite{Il}, Proposition 2.2.8  and isomorphism (3.11.3)].
Therefore $N_n/\bar{m}^{p^{n-1}}N_n$ injects into $\Omega^1_S/\bar{m}^{p^{n-1}}\Omega^1_S.$ Hence
$$\bar{m}^{p^{n-1}}\Omega^1_S\cap N_n=\bar{m}^{p^{n-1}}N_n.$$
Thus $$F^{n-1}(dm)\subset F^{n-1}(mW_{n}\Omega^1_{S}).$$ Recall that $Ker F^{n-1}=V(W_{n-1}\Omega^1_S).$ Hence we can conclude that
$$dm\subset mW_{n}\Omega^1_S+V(W_{n-1}\Omega^1_S).$$ Therefore, $d\bar{m}\subset \bar{m}\Omega^1_S.$





Now we claim that $\bar{m}=(\bar{m}\cap S^p)S.$ Since the statement is local, we may assume that $S$ is a regular local ring.
It follows that any derivation $g:S\to S$ preserves $m: g(\bar{m})\subset \bar{m}.$ 
Thus, $I$ is a submodule of $S$ viewed as a $\Hom_{S^p}(S, S)$-module.
Since $S$ is a finite rank free $S^p$-module, $\Hom_{S^p}(S, S)$ is
a matrix algebra over $S^p$ and the claim follows.

So $\bar{m}={m'}^pS$ for some ideal
$m'\subset S.$

 \end{proof}
Thus we can conclude using Lemma \ref{frob} that $m_n \mod VW_{n-1}(Z_1)$ is generated by elements in $Z_1^p.$
Since $m_1=F^{n-1}(m_n)Z_1,$ we get that  $m_1=(m_1\cap Z_1^{p^n})Z_1.$ Thus $m_1=l^{p^n}Z_1$
for some ideal $l\subset Z_1.$
We have a short exact sequenced 

$$
\begin{CD}
0\to I_n @>{{v}}>> I@>\mod p >> I_{1} \to 0 .
\end{CD}
$$
Recall that $I_n=m_nA_n, m_n\subset Z_n.$ 
 Let $x\in m_1.$ There 
there exist $z\in Z(A)$ and $y\in A$ such that $z+py\in I$ and $z\mod p=x.$
Therefore $$[py, A]\subset I.$$ Hence $$py\in Z(A/I)\cap pA/I.$$ 
Applying Proposition \ref{center} to $R=A/I, m=l^{p^n},$
we may write
$$py=\sum_{i\geq 1}p^i{z}_i^{p^{n-i}}+py'$$ where $z_i \mod p\in Z_1, y'\in I.$ 
Thus $$z+\sum_{i\geq 1}p^iz_i^{p^{n-i}}+py'\in I,$$ moreover $y'\in I$ and $z\mod p=x.$
Thus $$z'=z+\sum_ip^iz_i^{p^{n-i}}\in Z(A)\cap I, x=z' \mod p.$$ Hence $$I_1=(I\cap Z(A))A \mod p.$$
 On the other
hand $$pI=vI_n=v((I_n\cap Z_n)A_n)\subset (I\cap Z)A.$$ Therefore $I=(Z(A)\cap I)A.$

\end{proof}

Typical setting where Theorem \ref{azumaya} can be used is as follows. Let $Y$ be a smooth affine
variety over $W_n(\bf{k}).$ Let $\bar{Y}$ denote the $\mod p$ reduction of $Y.$ Let $D_{Y}$ (respectively $D_{\bar{Y}}$)
denote the ring of crystalline (PD) differential operators on $Y$ (respectively $\bar{Y}$). Then $D_{\bar{Y}}=D_Y/pD_Y$
is an Azumaya algebra over $X=T^{*}(\bar{Y})^{(1)}$-the Frobenius twist of the cotangent bundle of $\bar{Y}$ (by \cite{BMR}).
Also, it follows from \cite{BK} that the corresponding deformation Poisson bracket on $X$ is induced from the symplectic
form on $T^{*}(\bar{Y}).$ Thus, Theorem \ref{azumaya} applies to $A=D_{Y}.$

\begin{remark}

In Theorem \ref{azumaya} it is necessary to assume that ideal $I$ be flat over $W_n(\bf{k}).$ 
Indeed, let $m$ be an ideal in $Z_1$, such that $(m\cap Z_1^{p^{n-1}})Z_1\ne m$. Let $I$ be a preimage of $mA_1$ in $A$ under $\mod p$ reduction map. Then since $$Z(A) \mod p=Z_1^{p^{n-1}},$$ 
it follows that $I\neq (I\cap Z(A))A.$
In particular, $A$ is not an Azumaya algebra over $Z(A)$ for $n\geq 2.$

\end{remark}

In what follows we will assume that the ground field $\bold{k}$ is algebraically closed.
As a consequence of Theorem \ref{azumaya}, we have the following criterion for (topological) simplicity
of $W(\bold{k})$-algebras.

\begin{cor}\label{simple}

Let $A$ be a quantization of $A_1$ over $W(\bold{k})$
Then $Z(A)=W(\bf{k})$ and algebra $A[p^{-1}]$ is (topologically) simple.

\end{cor}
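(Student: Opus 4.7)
The plan is to combine Proposition \ref{center} with the $n=1$ case of Theorem \ref{azumaya}, using $p$-adic completeness of $A$ to lift finite-level information to statements about $A$ itself.

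Write $A_n := A/p^n A$. Each $A_n$ is a $W_n(\bf{k})$-quantization of $A_1$, and $A = \varprojlim_n A_n$ because a $W(\bf{k})$-quantization is $p$-adically complete and separated. Proposition \ref{center}, applied with $m = 0$ and $B = \bggo_X$, gives an isomorphism $\phi_n : W_n(\bggo_X) \xrightarrow{\sim} Z(A_n)$ for every $n$, and these are compatible with the natural projections (via the relations $\phi_{m-1}F = r\phi_m$ recalled above). Passing to the inverse limit identifies $Z(A)$ with $W(\bggo_X)$; under the tacit hypothesis that $A_1$ is central simple, i.e.\ $\bggo_X = \bf{k}$, this reads $Z(A) = W(\bf{k})$.

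For topological simplicity, I would take a nonzero closed two-sided ideal $J \subseteq A[p^{-1}]$ and contract to $I := J \cap A$. Since $J$ is stable under multiplication by $p^{-1}$, the quotient $A/I$ is $p$-torsion-free, so $I$ is $W(\bf{k})$-flat and the natural map $I/pI \hookrightarrow A_1$ identifies $I/pI$ with a nonzero two-sided ideal of the Azumaya algebra $A_1$. The $n = 1$ case of Theorem \ref{azumaya} then gives $I/pI = (Z(A_1) \cap I/pI) A_1$; because $Z(A_1) = \bf{k}$, the contraction contains $1$, so $1 \in I + pA$. Writing $1 = i + pa$ with $i \in I$ and $a \in A$, $p$-adic completeness of $A$ makes $i = 1 - pa$ a unit of $A$ (its inverse is the convergent series $\sum_{k \geq 0}(pa)^k$), so $I = A$ and $J = A[p^{-1}]$.

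The main obstacle is matching the precise statement $Z(A) = W(\bf{k})$ to the general framework: this is literally correct only once one interprets the standing hypotheses so that $A_1$ has scalar center (equivalently, that $X$ is a point); otherwise the inverse-limit computation yields $Z(A) = W(\bggo_X)$, and one must then work componentwise over $\spec \bggo_X$, or equivalently localize at a closed point of $X$, before the final $p$-adic unit trick takes effect. Once that point is granted, both assertions of the corollary reduce to two essentially routine inputs: an inverse-limit computation of the center via $\phi_n$, and a single $n=1$ application of Theorem \ref{azumaya} combined with the complete-local unit lift.
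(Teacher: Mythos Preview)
Your argument rests on the assumption that $Z(A_1)=\bggo_X$ is the scalar field $\bf{k}$, i.e.\ that $X$ is a point. That is not a standing hypothesis: $X$ is a smooth symplectic affine variety of typically positive dimension, and the corollary asserts $Z(A)=W(\bf{k})$ \emph{nonetheless}. The step that goes wrong is the inverse limit. The relation $\phi_{m-1}F=r\phi_m$ says that, after identifying $Z_m\cong W_m(\bggo_X)$ via $\phi_m$, the reduction $r:Z_m\to Z_{m-1}$ becomes the Witt-vector \emph{Frobenius} $F$, not the restriction $R$. Hence $\varprojlim_m Z_m \cong \varprojlim_{m,F} W_m(\bggo_X)$, which is not $W(\bggo_X)$. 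Concretely, $Z_{m+1}\bmod p = Z_1^{p^{m}}$, so the image of $Z(A)$ in $A_1$ lands in $\bigcap_{m}Z_1^{p^{m}}=\bf{k}$; this is exactly how the paper gets $Z(A)=W(\bf{k})$, and no ``$X$ is a point'' reinterpretation is needed or possible.

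The same misreading breaks the simplicity argument. From the $n=1$ case alone you only learn that $I/pI=(\,\bggo_X\cap I/pI\,)A_1$, which is a nontrivial ideal of $\bggo_X$, not automatically all of $\bf{k}$; the ``$1\in I+pA$'' step does not follow. The paper instead applies Theorem \ref{azumaya} at \emph{every} level $n$ to the flat ideals $I'_n=r_n(I')$, obtaining $m_1=(m_1\cap Z_1^{p^{n}})Z_1$ for all $n$. Writing $m_1\cap Z_1^{p^{n}}=l_n^{p^{n}}$ gives an ascending chain $l_n\subset l_{n+1}$ in the Noetherian ring $Z_1$, which stabilizes, forcing $m_1^{p}=m_1$ and hence $m_1\in\{0,Z_1\}$. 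Your proposed localization ``at a closed point of $X$'' does not rescue the $n=1$ approach, since $\bggo_{X,x}$ still has nonscalar center; the Frobenius-driven shrinking across all $n$ is the essential mechanism you are missing.
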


\begin{proof}

As before, we will put $A_n=A/p^nA, n\geq 1.$ 
Let us denote by $r_i$ the quotient map $A\to A/p^iA.$ As before, $r^i:A_{n+i}\to A_{n}$ denotes
the quotient map.
Then it follows that $r^n(Z_{n+1})= Z_1^{p^n}.$ Hence 
$$r_1(Z(A))\subset \cap_{i=1}^{\infty} Z_1^{p^i}=\bold{k}.$$
Hence $Z(A)\subset W(\bold{k})+pA$, which implies that $$Z(A)\subset \cap_{n=1}^{\infty}(W(\bold{k})+p^nA)=W(\bold{k}).$$
Let $I\subset A[p^{-1}]$ be a closed two-sided ideal. Put $I'=I\cap A[p^{-1}].$ Then $I'$ is a topologically free $W(\bold{k})$-module.
Thus, $I'_n=r_n(I')$ is a two-sided ideal of $A_n$ which is free as a $W_n(\bf{k})$-module. 
Let us put $m_n=I'_n\cap  Z_n, n\geq 1.$ Using Theorem \ref{azumaya} we obtain that for all $n\geq 1$
$$m_1={r}^n(m_{n+1})Z_1.$$ 
So, $$m_1=(m_1\cap Z_1^{p^n})Z_1.$$ Let us put $$m_1\cap Z_1^{p^n}=l_n^{p^n},\quad l_n\subset Z_1.$$ Clearly 
ideals $l_n$ form an ascending chain: $$l_n\subset l_{n+1}\subset \cdots.$$ 
Thus $\cup_{n=1}^{\infty}l_n=l_i,$ for some $i.$ To summarize, $m_1=l_i^{p^n}Z_1$ for all $n\geq i.$ Therefore
$m_1^p=m_1.$
 Thus, either $m_1=0$ or $m_1=Z_1.$ Therefore, $I'=0$ or $I'=A.$
\end{proof}

The next result provides a criterion for simplicity of algebras defined over global rings.
Let $R$ be a commutative domain. We will say that an $R$-algebra $A$ has a generic freeness property over $R$ if
for any finitely generated left $A$-module $M$, there exists a nonzero element $f\in R$ such that $M_f$ is a free
$R_f$-modules.
\begin{cor}\label{Simple}

Let $R $ be a finitely generated subring of $\mathbb{C},$ and $F$ its field of fractions. 
Let $S$ be an $R$-algebra
 which has generic freeness property over $R.$ Assume that for all nonzero $f\in R$ and 
 for infinitely many primes $p,$
 there exists an algebraically closed field $\bold{k}$ of characteristic $p$ and a homomorphism
 $\rho:R_f\to W(\bold{k}$), such that 
 $S\otimes_R\bold{k}$ is an Azumaya algebra and $\spec Z(S\otimes_R\bold{k})$ equipped with 
 the deformation Poisson bracket  is a smooth
 symplectic variety over $\bold{k}.$ Then algebra $S\otimes_RF$ is a simple.

\end{cor}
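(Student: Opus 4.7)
The plan is to argue by contradiction via reduction modulo $p$, directly mirroring the strategy behind Corollary \ref{simple}. Suppose $S\otimes_R F$ has a proper nonzero two-sided ideal, and pick a nonzero element $a$ in it. After multiplying by a suitable element of $R$, I may replace $a$ by the image of some $s\in S$, and set $J_0 := SsS$ and $M := S/J_0$. Because $J_0\otimes_R F$ lies inside the chosen proper ideal, $M\otimes_R F\neq 0$.

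Next I apply generic freeness simultaneously to the finitely generated left $S$-modules $S$, $J_0$, and $M$, taking a common multiple of the three resulting elements of $R$ to produce a single $0\neq f\in R$ with $S_f$, $(J_0)_f$, and $M_f$ all free over $R_f$; the last two remain nonzero after localization because their tensor with $F$ is nonzero. Using the hypothesis, I pick a prime $p$, an algebraically closed field $\bold{k}$ of characteristic $p$, and $\rho\colon R_f\to W(\bold{k})$ making $A_1 := S\otimes_R\bold{k}$ Azumaya over a smooth symplectic $\spec Z(A_1)$. Set $A := S_f\otimes_{R_f} W(\bold{k})$ and $J := (J_0)_f\otimes_{R_f} W(\bold{k})$. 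Since $M_f$ is $R_f$-free, the sequence $0\to (J_0)_f\to S_f\to M_f\to 0$ remains exact after $\otimes_{R_f} W(\bold{k})$, so $J$ embeds into $A$ as the two-sided ideal generated by $s\otimes 1$, and $A/J = M_f\otimes_{R_f} W(\bold{k})$ is a nonzero free $W(\bold{k})$-module. In particular $A$ is $W(\bold{k})$-free (hence $p$-adically separated), $J$ is $W(\bold{k})$-free, and $0\neq J\neq A$. The Poisson-bracket clause of the hypothesis gives exactly what is needed for $A$ to be a deformation quantization of $A_1$ over $W(\bold{k})$ in the paper's sense.

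Now I rerun the proof of Corollary \ref{simple} on $J$. For each $n\geq 1$, reduction modulo $p^n$ yields a deformation quantization $A_n := A/p^n A$ of $A_1$ over $W_n(\bold{k})$ and a $W_n(\bold{k})$-flat two-sided ideal $J_n := J/p^n J\subset A_n$; Theorem \ref{azumaya} gives $J_n = m_n A_n$ with $m_n := J_n\cap Z(A_n)$. The chain computation from Corollary \ref{simple}---using $r^n(Z_{n+1}) = Z_1^{p^n}$ from Proposition \ref{center} and stabilization of the ascending chain $l_n$ with $l_n^{p^n} = m_1\cap Z_1^{p^n}$ inside the Noetherian ring $Z_1$---delivers $m_1^p = m_1$, forcing $m_1\in\{0, Z_1\}$. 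If $m_1 = Z_1$, then $J + pA = A$, so $A/J = p(A/J)$, contradicting that $A/J$ is a nonzero free $W(\bold{k})$-module. If $m_1 = 0$, then $J_1 = 0$, i.e.\ $J\subset pA$; since $A/J$ is $p$-torsion-free, $J\cap pA = pJ$, giving $J = pJ$ and therefore $J\subset\bigcap_n p^n A = 0$. Either case contradicts $0\neq J\neq A$.

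The main technical obstacle is verifying that the base change $A = S_f\otimes_{R_f} W(\bold{k})$ really is a deformation quantization in the paper's sense, and that the ideal $J$ survives reduction modulo $p^n$ as a $W_n(\bold{k})$-flat two-sided ideal meeting the hypotheses of Theorem \ref{azumaya}. Both issues reduce to the freeness of $S_f$, $(J_0)_f$, and $M_f$ over $R_f$ supplied by the generic freeness hypothesis; once those are in place, the remaining steps are essentially a restatement of the argument in Corollary \ref{simple}.
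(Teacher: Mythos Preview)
Your approach is essentially the same as the paper's: assume non-simplicity, pass to a two-sided ideal in $S$ with nonzero quotient, localize $R$ using generic freeness so that the quotient is free, base-change along $\rho$ to $W(\bold{k})$, and then invoke the $m_1^{p}=m_1$ argument from Corollary~\ref{simple} to force a contradiction with freeness of the quotient.  The only packaging difference is that the paper first $p$-adically completes $S\otimes_R W(\bold{k})$ and then quotes Corollary~\ref{simple} as a black box, whereas you work with the uncompleted tensor product and inline that corollary's proof (using that a free $W(\bold{k})$-module is $p$-adically separated).  Both routes reach the same contradiction.

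One small point to tighten: you apply generic freeness to $J_0=SsS$, calling it a finitely generated left $S$-module, but without a Noetherian hypothesis on $S$ that is not guaranteed.  You do not actually need it: once $S_f$ and $M_f=(S/J_0)_f$ are free over $R_f$, the image $J$ of $(J_0)_f\otimes_{R_f}W(\bold{k})$ in $A$ is a $W(\bold{k})$-submodule of the free module $A$, hence free over the PID $W(\bold{k})$; and $A/J\cong M_f\otimes_{R_f}W(\bold{k})$ is free because $M_f$ is.  With that adjustment your argument goes through.
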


\begin{proof}
Assume that algebra $S\otimes_RF$ is not simple.
Then there exists  an $R$-torsion free nonzero proper ideal $I\subset S$  such that $I\cap R=0$. 
By localizing $R$ we may assume by generic flatness of $S$ that $S/I$ is a nonzero free $R$-module. Let $p$ be
a prime and let $\rho:R\to W(\bold{k})$ be a homomorphism as in the statement.
 Denote by $A$ the $p$-adic completion of $S\otimes_RW(\bold{k}),$
and denote the $p$-adic completion of $I\otimes_RW(\bold{k})$ by $\bar{I}.$
 Thus, algebra $A$ satisfies assumptions of  Corollary \ref{simple}. Hence $\bar{I}[p^{-1}]=A[p^{-1}].$
In particular, $A/\bar{I}=(S/I)\otimes_RW(\bold{k})$ is a nonzero $p$-torsion $W(\bf{k})$-module, a contradiction.
\end{proof}

\end{document}